\newtheorem{theorem}{Theorem}[section]
\newtheorem{proposition}[theorem]{Proposition}
\newtheorem{lemma}[theorem]{Lemma}
\newtheorem{example}[theorem]{Example}
\newtheorem{definition}[theorem]{Definition}
\DeclareMathOperator{\convo}{\xrightarrow[]{o}}
\DeclareMathOperator{\convso}{\xrightarrow[]{so}}
\DeclareMathOperator{\convvso}{\xrightarrow[]{vso}}
\DeclareMathOperator{\convn}{\xrightarrow[]{\|\cdot\|}}
\renewcommand{\subsection}{\@startsection{subsection}{1}
{0pt}{3.25ex plus 1ex minus.2ex}{-1em}{\normalfont\normalsize\bf}}
\begin{document}

\title{{\bf On Levi operators between normed and vector lattices}}
\maketitle
\author{\centering{{Eduard Emelyanov$^{1}$\\ 
\small $1$ Sobolev Institute of Mathematics, Novosibirsk, Russia}

\bigskip

\abstract{The notion of a Levi operator is an operator abstraction of the Levy property of a norm or, more generally 
of the Levi topology on a locally solid vector lattice.
Various aspects of Levi operators have been studied recently by several authors. The present paper 
is devoted to Levi operators from a normed lattice to a vector lattice. It is proved that every finite rank operator 
is a Levi operator. An example is given showing that the sum of a positive rank one operator 
and a positive compact Levi operator need not to be a Levi operator.
We prove that every quasi Levi operator is continuous. 
It is shown that the set of Levi operators on the space of convergent sequences is not complete in the operator norm.
Several results concerning the domination problem for Levi operators and the relations between Levi operators and KB-spaces 
are established.

\bigskip
{\bf{Keywords:}} 
{\rm normed lattice, Levi operator, finite rank operator, compact operator, KB-space}\\

{\bf MSC2020:} {\rm 46A40, 46B42, 47L05}
\large

\medskip

\section{Introduction}

\hspace{4mm}
A normed lattice $E$ has a Levi norm whenever every increasing 
bounded net in $E$ converges in order. An effective
line in the study of operator abstractions of a vector 
space property consists in replacing vector space $X$ by an operator $T:X\to Y$,
and redistributing the property between $X$, $Y$, and $T$
(see, for example \cite{AlEG2022,EG2023} and references therein).
Several kinds of Levi operators introduced recently \cite{AlEG2022,E2025-2,GE2022,ZC2022} as operator abstractions of the Levi norm. 
We shall investigate Levi operators following \cite{AlEG2022}.

Throughout the paper vector spaces are real, vector lattices Archimedean,
and operators linear. By $\text{\rm L}_+(E,F)$ (resp., $\text{\rm L}(E,F)$, $\text{\rm F}(E,F)$, $\text{\rm K}(E,F)$) we denote the set of 
positive (resp., continuous, finite rank continuous, compact) operators from $E$ to $F$.
Symbol $I_X$ stands for the identity operator in $X$ and $B_X$ for the closed unit ball of $X$.
By VL (NL, BL) we abbreviate a vector lattice (resp., normed lattice, Banach lattice).
Let $(y_\alpha)$ be a net in a VL $E$. We write 
\begin{enumerate}[-]
\item\
$y_\alpha\uparrow$ (resp., $y_\alpha\downarrow$) if the net $(y_\alpha)$ increases 
(resp., decreases).
\item\
$y_\alpha\downarrow 0$ whenever $y_\alpha\downarrow$ and $\inf_E y_\alpha=0$.
\end{enumerate}

\begin{definition}\label{order convergence} 
{\em
Let $(x_\alpha)$ be a net in a VL $E$ and $x\in E$. Then
\begin{enumerate}[$a)$]
\item\ 
$(x_\alpha)$ order converges to $x$ (briefly, $x_\alpha\stackrel{\text{\rm o}}{\to} x$) 
whenever there exists a net $(y_\beta)$ in $E$, $y_\beta\downarrow 0$ and, for each $\beta$ 
there is an $\alpha_\beta$ with $|x_\alpha-x|\le y_\beta$ for all $\alpha\ge\alpha_\beta$. 
\item\ 
$(x_\alpha)$ strong order converges to $x$ (briefly, $x_\alpha\stackrel{\text{\rm so}}{\to} x$) 
whenever there exist a net $y_\alpha\downarrow 0$ in $E$ and $\alpha_0$ such that $|x_\alpha-x|\le y_\alpha$ for all $\alpha\ge\alpha_0$.
\item\ 
$(x_\alpha)$ very strong order converges to $x$ ($x_\alpha\convvso x$) 
whenever there exists a net $y_\alpha\downarrow 0$ in $E$ such that $|x_\alpha-x|\le y_\alpha$ for all $\alpha$.
\end{enumerate}
}
\end{definition}
\noindent
Although the notion of order convergence as in Definition \ref{order convergence}~$a)$ is standard now-days (e.g. \cite{EGK2019}), it differs with one 
used in some textbooks like \cite{AB2006,Mey1991} that agrees with vso convergence. 
Clearly, $x_\alpha\convvso x\Longrightarrow x_\alpha\convso x\Longrightarrow x_\alpha\convo x$.
The so-conver\-gence agrees with o-convergence in a Dedekind complete VL \cite[Proposition 1.5]{AS2005}.  

\medskip
Now, we present main definition of our paper following \cite[Definition 1.1]{AlEG2022} with a new notion added in the item $b)$.

\begin{definition}\label{order-to-topology} 
{\em
An operator $T$ from a NL $E$ to a VL $F$ is
\begin{enumerate}[$a)$]
\item 
\text{\rm ($\sigma$-) Levi} if, for every bounded
increasing net (sequence) $(x_\alpha)$ of $E$,
there exists an $x\in E$ with $Tx_\alpha\stackrel{\text{\rm o}}{\to} Tx$.
(briefly, $T\in\text{\rm L}_{\text{\rm Levi}}(E,F)$ 
(resp., $T\in\text{\rm L}^\sigma_{\text{\rm Levi}}(E,F)$).
\item 
\text{\rm completely quasi ($\sigma$-) Levi} 
if $(Tx_\alpha)$ o-converges for every bounded increasing net 
(sequence) $(x_\alpha)$ of $E$. 
(briefly, $T\in\text{\rm L}_{\text{\rm cqLevi}}(E,F)$ 
(resp., $T\in\text{\rm L}^\sigma_{\text{\rm cqLevi}}(E,F)$).
\item 
\text{\rm quasi ($\sigma$-) Levi} if $T$ takes bounded
increasing nets (sequences) of $E$ to o-Cauchy ones.
(briefly, we write $T\in\text{\rm L}_{\text{\rm qLevi}}(E,F)$ 
(resp., $T\in\text{\rm L}^\sigma_{\text{\rm qLevi}}(E,F)$).
\end{enumerate}}
\end{definition}
\medskip
\noindent
It follows directly from Definition \ref{order-to-topology} that
$$
\begin{matrix}\label{matr}
   \text{\rm L}_{\text{\rm Levi}}(E,F) & \subseteq &   
   \text{\rm L}_{\text{\rm cqLevi}}(E,F)
                      & \subseteq &   \text{\rm L}_{\text{\rm qLevi}}(E,F)  \\
  \text{\small $|\bigcap$}  &  & \text{\small $|\bigcap$} &   
  \ \ \ \ \ \ \ \  & \text{\small $|\bigcap$}  \ \ \ \ \ \ \ \ \ \ \ \ \ \ \ \\
  \text{\rm L}^\sigma_{\text{\rm Levi}}(E,F) &\subseteq &    
  \text{\rm L}^\sigma_{\text{\rm cqLevi}}(E,F) & \subseteq &   
  \text{\rm L}^\sigma_{\text{\rm qLevi}}(E,F)
\end{matrix}
\eqno(*)
$$
Replacing o- by vso-, the comp\-letely quasi ($\sigma$-) Levi 
operators turn to ($\sigma$-) Levi operators of Z. Chen and F. Zhang \cite{ZC2022}.
Since vso-convergence implies order conver\-gence, each ($\sigma$-) Levi 
operator  $T:E\to F$ of \cite{ZC2022} is completely quasi ($\sigma$-) Levi in the sense of
Definition \ref{order-to-topology}. 

\medskip
The present paper is organized as follows.
Section 2 contains several basic properties of Levi operators.
Section 3 is devoted to more advanced topics, e.g.: 
to continuity of quasi Levi operators; characterization of KB-spaces as
those order continuous BLs on which every positive compact operator is Levi; and
to examples showing that the set of Levi operators in general is not closed under 
the addition, the operator norm, and passing to dominated operators. 

\medskip
We refer the reader to C. D. Aliprantis and O. Burkinshaw \cite{AB2006}, S. S. Kutateladze \cite{Kut1983}, and P. Meyer-Nieberg \cite{Mey1991} for unexplained terminology.

\section{Basic properties of Levi operators}

\hspace{4mm}
Here, we discuss basic properties of Levi operators and related examples.

\medskip
First, we show that all inclusions in $(*)$ are proper.
Let $\ell^\infty_\omega(\mathbb{R})$ be a BL of 
real-valued bounded countably supported functions on $\mathbb{R}$. 
Clearly, $I_{\ell^\infty_\omega(\mathbb{R})}\in
\text{\rm L}^\sigma_{\text{\rm Levi}}(\ell^\infty_\omega(\mathbb{R}))$. 
Take a bounded 
increasing net $(1_\alpha)_{\alpha\in{\cal P}_{fin}(\mathbb{R})}$, where
${\cal P}_{fin}(\mathbb{R})$ is the set of finite subsets of $\mathbb{R}$
directed by inclusion, and $1_\alpha$  is the indicator function of 
$\alpha$. Since $(1_\alpha)$ is not o-Cauchy, 
$I_{\ell^\infty_\omega(\mathbb{R})}\notin
\text{\rm L}_{\text{\rm qLevi}}(\ell^\infty_\omega(\mathbb{R}))$.  
It follows
$$
   \text{\rm L}_{\text{\rm Levi}}(\ell^\infty_\omega(\mathbb{R}))\subsetneqq
   \text{\rm L}^\sigma_{\text{\rm Levi}}(\ell^\infty_\omega(\mathbb{R})), \ \
   \text{\rm L}_{\text{\rm cqLevi}}(\ell^\infty_\omega(\mathbb{R}))\subsetneqq
   \text{\rm L}^\sigma_{\text{\rm cqLevi}}(\ell^\infty_\omega(\mathbb{R})), 
$$
and $\text{\rm L}_{\text{\rm qLevi}}(\ell^\infty_\omega(\mathbb{R}))\subsetneqq\text{\rm L}^\sigma_{\text{\rm qLevi}}(\ell^\infty_\omega(\mathbb{R}))$. 

\medskip
\noindent
Denote elements of the BL $c$ of convergent real sequences
$c$ by $\mathbf{a}=\sum_{n=1}^\infty a_n \cdot e_n$,
where $e_n$ is the n-th unit vector of $c$ and $a_n$ converges in $\mathbb{R}$.
Since each bounded increasing
net in $c$ is \text{\rm o}-Cauchy, $I_c\in\text{\rm L}_{\text{\rm qLevi}}(c)$.
As, the bounded increasing sequence $I_c\mathbf{f}_n=\mathbf{f}_n=\sum_{k=1}^n e_{2k-1}$ is not 
o-convergent in $c$, $I_c\notin\text{\rm L}^\sigma_{\text{\rm cqLevi}}(c)$.
It follows
$$
   \text{\rm L}_{\text{\rm cqLevi}}(c)\subsetneqq\text{\rm L}_{\text{\rm qLevi}}(c) \ \ \ 
   \text{\rm and} \ \ \ 
   \text{\rm L}^\sigma_{\text{\rm cqLevi}}(c)\subsetneqq\text{\rm L}^\sigma_{\text{\rm qLevi}}(c).
$$
\medskip
\noindent
Consider the embedding operator $J:c\to\ell^\infty$.
It is straightforward $J\in\text{\rm L}_{\text{\rm cqLevi}}(c,\ell^\infty)$.
By considering of the bounded increasing sequence $\sum_{k=1}^n e_{2k}$ in $c$,
we conclude $J\notin\text{\rm L}^\sigma_{\text{\rm cqLevi}}(c,\ell^\infty)$.
It follows
$$
   \text{\rm L}_{\text{\rm Levi}}(c,\ell^\infty)\subsetneqq
   \text{\rm L}_{\text{\rm cqLevi}}(c,\ell^\infty)\ 
   \text{\rm and} \
   \text{\rm L}^\sigma_{\text{\rm Levi}}(c,\ell^\infty)\subsetneqq
   \text{\rm L}^\sigma_{\text{\rm cqLevi}}(c,\ell^\infty).
$$

\medskip
Since every o-Cauchy net (sequence) in a Dedekind ($\sigma$-) complete VL is order convergent, the next proposition
follows directly from Definition \ref{order-to-topology}. 

\begin{proposition}\label{prop1bb}
Let $E$ be a NL and $F$ a Dedekind $\sigma$-complete VL. 
The following holds.
\begin{enumerate}[$a)$]
\item 
$\text{\rm L}^\sigma_{\text{\rm cqLevi}}(E,F)=
\text{\rm L}^\sigma_{\text{\rm qLevi}}(E,F)$.
\item 
$\text{\rm L}_{\text{\rm cqLevi}}(E,F)=
\text{\rm L}_{\text{\rm qLevi}}(E,F)$
if $F$ is Dedekind complete.
\end{enumerate}
\end{proposition}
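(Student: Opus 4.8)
The plan is to observe that Proposition~\ref{prop1bb} is essentially a direct translation of a standard lattice-theoretic fact: in a Dedekind $\sigma$-complete vector lattice every o-Cauchy \emph{sequence} is order convergent, and in a Dedekind complete vector lattice every o-Cauchy \emph{net} is order convergent. Once this is isolated, the two equalities follow by chasing definitions, since the only gap between the ``completely quasi'' and the ``quasi'' classes is precisely whether the o-Cauchy images produced by the quasi condition are automatically o-convergent.

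For part $a)$, I would start from the trivial inclusion $\text{\rm L}^\sigma_{\text{\rm cqLevi}}(E,F)\subseteq\text{\rm L}^\sigma_{\text{\rm qLevi}}(E,F)$ coming from $(*)$, and prove the reverse. So let $T\in\text{\rm L}^\sigma_{\text{\rm qLevi}}(E,F)$ and let $(x_n)$ be a bounded increasing \emph{sequence} in $E$. By definition of quasi $\sigma$-Levi, $(Tx_n)$ is an o-Cauchy sequence in $F$. Since $F$ is Dedekind $\sigma$-complete, every o-Cauchy sequence in $F$ is order convergent, so $(Tx_n)$ o-converges in $F$; this is exactly the condition for $T\in\text{\rm L}^\sigma_{\text{\rm cqLevi}}(E,F)$. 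For part $b)$, the argument is the same but with nets in place of sequences: if $T\in\text{\rm L}_{\text{\rm qLevi}}(E,F)$ and $(x_\alpha)$ is a bounded increasing net in $E$, then $(Tx_\alpha)$ is an o-Cauchy net in $F$, and Dedekind completeness of $F$ guarantees that o-Cauchy nets order converge, whence $T\in\text{\rm L}_{\text{\rm cqLevi}}(E,F)$.

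The only point deserving care — and the ``main obstacle'', such as it is — is justifying that o-Cauchy sequences (resp.\ nets) in a Dedekind $\sigma$-complete (resp.\ complete) VL are order convergent, and making sure the notion of o-Cauchyness used in Definition~\ref{order-to-topology}$c)$ is the one for which this classical completeness result holds. A net $(z_\alpha)$ is o-Cauchy when the net $(z_\alpha-z_\beta)_{(\alpha,\beta)}$ order converges to $0$; given such a net, one sets $u_\alpha=\sup_{\gamma,\delta\ge\alpha}|z_\gamma-z_\delta|$, which exists by Dedekind completeness and satisfies $u_\alpha\downarrow 0$, and then checks that $z:=\limsup_\alpha z_\alpha$ (again available by Dedekind completeness) is an order limit with $|z_\alpha-z|\le u_\alpha$ eventually. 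For the $\sigma$-version one restricts to sequences and only needs countable suprema and infima, i.e.\ Dedekind $\sigma$-completeness. I would simply cite this as the standard fact it is (it is remarked immediately before the statement of Proposition~\ref{prop1bb} in the text) rather than reproving it in detail.

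Thus the proof is little more than a two-line deduction from the definitions plus the quoted completeness property; no delicate estimates are required, and the sequence/net dichotomy is handled uniformly by the $\sigma$-completeness versus full completeness hypotheses on $F$.
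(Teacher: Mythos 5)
Your proposal is correct and follows exactly the route the paper takes: the paper's entire proof is the one-sentence observation that o-Cauchy nets (sequences) in a Dedekind ($\sigma$-)complete VL are order convergent, after which both equalities are immediate from Definition~\ref{order-to-topology}. Your additional sketch of why that completeness fact holds (eventual order boundedness plus the $u_\alpha=\sup_{\gamma,\delta\ge\alpha}|z_\gamma-z_\delta|$ construction) is sound and merely makes explicit what the paper leaves as a cited standard fact.
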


\noindent
The Dedekind ($\sigma$-) completeness of $F$ is essential in
Proposition \ref{prop1bb}. Indeed, for item $a)$ observe 
that every increasing bounded sequence in $c$ is o-Cauchy, and hence 
$I_c\in\text{\rm L}^\sigma_{\text{\rm qLevi}}(c)$. Pick an increasing 
bounded sequence $\mathbf{x}_n=\sum\limits_{k=1}^n e_{2k}$ 
in $c$. Since $\mathbf{x}_n\convo\sum\limits_{k=1}^\infty e_{2k}$ in $\ell^\infty$
and $\sum\limits_{k=1}^\infty e_{2k}\notin c$ then
$I_c\notin\text{\rm L}^\sigma_{\text{\rm cqLevi}}(c)$.
For item $b)$, consider the Dedekind $\sigma$-complete 
(but not Dedekind complete) BL $c(\mathbb{R})$
such that, for each element $f$ of $c(\mathbb{R})$ there exists some $c_f\in\mathbb{R}$
satisfying $\text{\rm card}\big(\{t\in\mathbb{R}: |f(t)-c_f|\ge 1/n\}\big)<\infty$
for all $n\in\mathbb{N}$. As every increasing bounded net in $c(\mathbb{R})$ 
is o-Cauchy then $I_{c(\mathbb{R})}\in\text{\rm L}_{\text{\rm qLevi}}(c(\mathbb{R}))$.
Since an increasing bounded sequence $(\mathbb{I}_{\{1,2,...,n\}})$ is not o-convergent in $c$ 
then even $I_{c(\mathbb{R})}\notin\text{\rm L}^\sigma_{\text{\rm cqLevi}}(c(\mathbb{R}))$.

\medskip
Like compact operators, the Levi operators extend finite rank continuous operators. More precisely, we have the following result that 
can be viewed as a Levi version of \cite[Lemma 1~{\it iv})]{E2025-1}.

\begin{lemma}\label{prop1}
$\text{\rm F}(E,F)\subseteq\text{\rm L}_{\text{\rm Levi}}(E,F)$
holds for all normed lattices $E$ and $F$.
\end{lemma}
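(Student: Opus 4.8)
The plan is to exploit the explicit representation of a finite rank operator, reduce the convergence of $(Tx_\alpha)$ to that of finitely many scalar nets, and then upgrade norm convergence in $F$ to order convergence by a domination argument using the Archimedean property of $F$. First I would fix $T\in\text{\rm F}(E,F)$, let $R:=T(E)$ be its finite-dimensional range, choose a basis $y_1,\dots,y_m$ of $R$, and write $Tx=\sum_{i=1}^m\varphi_i(x)\,y_i$, where $\varphi_i:=\pi_i\circ T$ and $\pi_i\colon R\to\mathbb{R}$ is the $i$-th coordinate functional of that basis. Each $\pi_i$ is continuous on the finite-dimensional $R$, and $T$ is continuous, so $\varphi_i$ lies in the norm dual $E'$ of $E$.

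Next, let $(x_\alpha)$ be a norm-bounded increasing net in $E$; I would show each scalar net $(\varphi_i(x_\alpha))$ converges in $\mathbb{R}$. Since $E$ is a normed lattice, $E'$ is a Banach lattice, so $\varphi_i=\varphi_i^+-\varphi_i^-$ with $\varphi_i^\pm\in E'_+$; for $\psi\in E'_+$ the net $(\psi(x_\alpha))$ is increasing and bounded above by $\|\psi\|\cdot\sup_\alpha\|x_\alpha\|$, hence converges, and therefore so does $(\varphi_i(x_\alpha))$, say $\varphi_i(x_\alpha)\to c_i$. Then $z:=\sum_{i=1}^m c_i\,y_i$ is a linear combination of the chosen basis, so $z\in R=T(E)$, and we may pick $x\in E$ with $Tx=z$.

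The only delicate step is to strengthen $Tx_\alpha\to Tx$ (which holds in the norm of $F$) to $Tx_\alpha\convo Tx$. I would set $u:=\sum_{i=1}^m|y_i|\in F_+$ and $\lambda_\alpha:=\max_{1\le i\le m}|\varphi_i(x_\alpha)-c_i|$, so $\lambda_\alpha\to 0$ in $\mathbb{R}$ and $|Tx_\alpha-Tx|\le\lambda_\alpha u$ for all $\alpha$. Fixing $\alpha_0$ with $\lambda_\alpha\le 1$ for $\alpha\ge\alpha_0$ and setting $\mu_\beta:=\sup_{\alpha\ge\beta}\lambda_\alpha$ for $\beta\ge\alpha_0$, the net $(\mu_\beta)_{\beta\ge\alpha_0}$ decreases to $0$ in $\mathbb{R}$; as $F$ is Archimedean this gives $\mu_\beta u\downarrow 0$ in $F$, while $|Tx_\alpha-Tx|\le\lambda_\alpha u\le\mu_\beta u$ whenever $\alpha\ge\beta\ge\alpha_0$. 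This is exactly $Tx_\alpha\convo Tx$, with dominating net $(\mu_\beta u)_{\beta\ge\alpha_0}$, so $T\in\text{\rm L}_{\text{\rm Levi}}(E,F)$.

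I do not expect a genuine obstacle: the argument is an assembly of three standard facts — continuous functionals on a normed lattice are order bounded (so the coefficient nets converge), the limit vector is a linear combination of the fixed basis and hence automatically in $T(E)$, and a scalar null net dominating $(Tx_\alpha-Tx)$ against one fixed positive vector yields an order-null net in the Archimedean lattice $F$. The one point to watch is not to conflate order convergence computed inside the sublattice generated by $\{y_1,\dots,y_m\}$ with order convergence in $F$ itself; dominating by $\mu_\beta u$, a computation carried out entirely in $F$, avoids this.
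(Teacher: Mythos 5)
Your proof is correct and follows essentially the same route as the paper: represent $T$ via finitely many functionals, split each functional into positive and negative parts so that the coefficient nets converge, observe the limit is a combination of the fixed spanning vectors and hence lies in $TE$, and upgrade to order convergence using finite-dimensionality of the range. You merely spell out two points the paper leaves implicit — the reduction is stated for arbitrary bounded increasing nets rather than only positive ones in $(B_E)_+$, and the dominating net $\mu_\beta u$ makes explicit the paper's appeal to $\dim(TE)<\infty$.
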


\begin{proof}
Let $T = \sum_{k=1}^n f_k \otimes y_k 
   \  \text{for} \   y_1, \dots, y_n \in TE$
and $f_1, \dots, f_n \in E'$. Denote
$$
   T_1:=\sum_{k=1}^n f_k^+ \otimes y_k \ \ \text{\rm and} \ \ 
   T_2:=\sum_{k=1}^n f_k^- \otimes y_k. 
$$
Let $(x_\alpha)$ be an increasing net in $(B_E)_+$. Then $f_k^+(x_\alpha)$ 
and $f_k^-(x_\alpha)$ are increasing and bounded in $\mathbb{R}$, and hence 
$f_k^+(x_\alpha)\to a_k$ and $f_k^-(x_\alpha)\to b_k$
for some $a_k,b_k\in\mathbb{R}_+$. Since $\dim(TE)<\infty$, it follows
$T_1x_\alpha\convo\sum_{k=1}^n a_k y_k$ and $T_2x_\alpha\convo\sum_{k=1}^n b_k y_k$.
Thus,
$Tx_\alpha=(T_1x_\alpha-T_2x_\alpha)\convo\sum_{k=1}^n(a_k-b_k)y_k\in TE$.
Take $x\in E$ with $Tx=\sum_{k=1}^n(a_k-b_k)y_k$. Then $Tx_\alpha\convo Tx$,
and hence $T\in\text{\rm L}_{\text{\rm Levi}}(E,F)$.
\end{proof}

\medskip
In general, $\text{\rm L}_{\text{\rm Levi}}(E,F)$ is not closed under the addition (see Example \ref{Levi are not VS} below).
The situation with (completely) quasi Levi operators is more pleasant. They 
are closed under linear operations. 

\begin{lemma}\label{prop2}
{\em
Let $E$ be a NL and $F$ be a VL. The following holds.
\begin{enumerate}[$a)$]
\item 
The sets $\text{\rm L}_{\text{\rm cqLevi}}(E,F)$,
$\text{\rm L}^\sigma_{\text{\rm cqLevi}}(E,F)$,
$\text{\rm L}_{\text{\rm qLevi}}(E,F)$,
and $\text{\rm L}^\sigma_{\text{\rm qLevi}}(E,F)$ are vector spaces.
\item 
If $F$ is a NL then
$\text{\rm span}(\text{\rm K}_+(E,F))\subseteq\text{\rm L}_{\text{\rm cqLevi}}(E,F)$.
\end{enumerate}}
\end{lemma}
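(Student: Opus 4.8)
The plan is to handle $a)$ by a routine compatibility argument and $b)$ by reducing to positive compact operators via $a)$ and then upgrading a norm-convergent subnet to an order-convergent net.

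For item $a)$, note first that each of the four sets contains $0$ and is stable under scalar multiples, since $(\lambda T)x_\alpha=\lambda(Tx_\alpha)$ and multiplying a net by a fixed scalar preserves both o-convergence and the o-Cauchy property. The only thing to verify is stability under addition, and for this I would record the elementary fact, valid in any Archimedean VL, that if $(u_\alpha)$ and $(v_\alpha)$ are nets over a common directed set with $u_\alpha\convo u$ and $v_\alpha\convo v$ (resp.\ both o-Cauchy), then $u_\alpha+v_\alpha\convo u+v$ (resp.\ $(u_\alpha+v_\alpha)$ is o-Cauchy). This follows from $|u_\alpha+v_\alpha-(u+v)|\le|u_\alpha-u|+|v_\alpha-v|$ together with the observation that if $y_\beta\downarrow 0$ and $z_\gamma\downarrow 0$ then $(y_\beta+z_\gamma)\downarrow 0$ over the product directed set (and similarly for the difference nets defining the o-Cauchy condition). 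Applying this with $u_\alpha=Sx_\alpha$, $v_\alpha=Tx_\alpha$ for $S,T$ in the set under consideration and an arbitrary bounded increasing net (sequence) $(x_\alpha)$ of $E$ shows that $S+T$ lies in the same set. Nothing here poses a genuine difficulty; it is bookkeeping about order convergence being compatible with the linear structure.

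For item $b)$, since $\text{\rm L}_{\text{\rm cqLevi}}(E,F)$ is a vector space by $a)$, it suffices to prove $\text{\rm K}_+(E,F)\subseteq\text{\rm L}_{\text{\rm cqLevi}}(E,F)$. Fix $T\in\text{\rm K}_+(E,F)$ and a bounded increasing net $(x_\alpha)$ in $E$, say $\|x_\alpha\|\le r$ for all $\alpha$. Because $T\ge 0$ and $(x_\alpha)$ increases, $(Tx_\alpha)$ is an increasing net, and it lies in $T(rB_E)$, a set with norm-compact closure in $F$ since $T$ is compact; hence some subnet satisfies $Tx_{\alpha_\beta}\convn z$ for some $z\in F$. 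The crux is to upgrade this to order convergence of the whole net: using that the positive cone of the NL $F$ is norm-closed and that $\beta\mapsto\alpha_\beta$ is cofinal, one gets $Tx_\alpha\le z$ for every $\alpha$, and if $w$ dominates all $Tx_\alpha$ then $w\ge z$; thus $z=\sup_F Tx_\alpha$. Consequently $z-Tx_\alpha\downarrow 0$, i.e.\ $Tx_\alpha\uparrow z$, so $Tx_\alpha\convvso z$ and a fortiori $Tx_\alpha\convo z$. Therefore $T\in\text{\rm L}_{\text{\rm cqLevi}}(E,F)$, and $\text{\rm span}(\text{\rm K}_+(E,F))\subseteq\text{\rm L}_{\text{\rm cqLevi}}(E,F)$ follows.

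The step I expect to be the real point is exactly this passage from a norm-convergent subnet to an order-convergent net: it uses the positivity of $T$ (which makes $(Tx_\alpha)$ monotone), the norm-closedness of the cone of $F$, and cofinality of the subnet, and it is essential, since Example~\ref{Levi are not VS} shows that dropping a sign condition destroys the conclusion even for the Levi property. One should also be mildly careful about the hypotheses: ``bounded'' is meant in the norm sense, and ``compact'' is taken so that $\overline{T(B_E)}$ is compact in $F$, which is what lets the subnet limit $z$ be found inside $F$ rather than only in its completion.
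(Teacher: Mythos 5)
Your proof is correct and follows essentially the same route as the paper: part $a)$ by the compatibility of o-convergence and the o-Cauchy property with linear combinations, and part $b)$ by reducing to $T\in\text{\rm K}_+(E,F)$, extracting a norm-convergent subnet of the increasing net $(Tx_\alpha)$, and upgrading its limit to $\sup_\alpha Tx_\alpha$ via norm-closedness of the cone. The only difference is that you spell out the subnet-to-supremum step, which the paper compresses into the single line ``Since $Tx_\alpha\uparrow$ then $Tx_\alpha\convo y$.''
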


\begin{proof}
$a)$\
Let $T_1,T_2\in\text{\rm L}_{\text{\rm cqLevi}}(E,F)$, $a_1,a_2\in\mathbb{R}$, and let 
$(x_\alpha)$ be an increasing bounded net in $E$. Then, there exist 
$y_1,y_2\in F$ satisfying $T_1x_{\alpha_\beta}\convo y_1$ and 
$T_2x_{\alpha_\beta}\convo y_2$, and hence 
$(a_1T_1+a_2T_2)x_{\alpha_\beta}\convo a_1y_1+a_2y_2$.

The remaining cases are similar.
\medskip

$b)$\
Accordingly to $a)$, it suffices to show that each positive compact operator 
is completely quasi Levi.
Let $T\in\text{\rm K}_+(E,F)$ and let $(x_\alpha)$ be an 
increasing net in $(B_E)_+$. Then $(Tx_\alpha)$ has a subnet such that
$Tx_{\alpha_\beta}\convn y\in F$. Since $Tx_\alpha\uparrow$ then $Tx_\alpha\convo y$. 
Therefore $T\in\text{\rm L}_{\text{\rm cqLevi}}(E,F)$.
\end{proof}

\medskip
The following proposition shows that the inclusion
$\text{\rm L}_{\text{\rm Levi}}(E)\subseteq\text{\rm L}_{\text{\rm cqLevi}}(E)$
can be proper even while restricted to positive compact operators.
This proposition also provides a counter-example 
to \cite[Proposition 3.5]{AlEG2022}, where it is faultily claimed that every weakly 
compact positive operator between BLs is a Levi operator.

\begin{proposition}\label{prop1aa}
Let $(\beta_k)$ be a strictly positive null sequence of reals.
Then $T\in\text{\rm L}_{\text{\rm cqLevi}}(c,c_0)\setminus\text{\rm L}^\sigma_{\text{\rm Levi}}(c,c_0)$
for the operator $T:c\to c_0$ defined by 
$$
   T\mathbf{a} = \sum_{k=1}^\infty\beta_k a_ke_k. 
$$
Moreover, $T|_{c_0}\in\text{\rm L}_{\text{\rm cqLevi}}(c_0,c_0)
\setminus\text{\rm L}^\sigma_{\text{\rm Levi}}(c_0,c_0)$.
\end{proposition}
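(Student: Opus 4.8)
The plan is to verify the three membership claims separately and then reduce the $c_0$ statement to the already-established facts about $T:c\to c_0$.

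\medskip

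First I would show $T\in\text{\rm L}_{\text{\rm cqLevi}}(c,c_0)$. Let $(\mathbf{x}_\alpha)$ be an increasing norm-bounded net in $c$, say $\|\mathbf{x}_\alpha\|_\infty\le M$. Writing $\mathbf{x}_\alpha=\sum_n x_n^{(\alpha)}e_n$, each coordinate $x_n^{(\alpha)}$ is increasing and bounded in $[-M,M]$, so $x_n^{(\alpha)}\to c_n$ for some $c_n\in[-M,M]$. Set $\mathbf{y}:=\sum_k\beta_k c_k e_k$; since $(\beta_k)$ is a null sequence and $|\beta_k c_k|\le M\beta_k$, we have $\mathbf{y}\in c_0$. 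I claim $T\mathbf{x}_\alpha\convo\mathbf{y}$ in $c_0$. Indeed $T\mathbf{x}_\alpha-\mathbf{y}=\sum_k\beta_k(x_k^{(\alpha)}-c_k)e_k$ and each summand increases to $0$ coordinatewise with $|T\mathbf{x}_\alpha-\mathbf{y}|\le 2M\sum_k\beta_k|\,\cdot\,|$... more carefully: the net $T\mathbf{x}_\alpha$ is increasing (as $T\ge0$), bounded above by $\mathbf{y}$, and $T\mathbf{x}_\alpha\uparrow$; one checks $\inf_\alpha(\mathbf{y}-T\mathbf{x}_\alpha)=0$ in $c_0$ because for any $\varepsilon>0$ only finitely many $k$ have $M\beta_k\ge\varepsilon$, and on those finitely many coordinates $x_k^{(\alpha)}\to c_k$, so a downward-directed net witnessing $\mathbf{y}-T\mathbf{x}_\alpha\downarrow 0$ can be built. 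Hence $T\mathbf{x}_\alpha\convo\mathbf{y}$, giving complete quasi Levi-ness. (Here Dedekind $\sigma$-completeness of $c_0$ is not available, so I cannot merely invoke Proposition \ref{prop1bb}; the order convergence must be exhibited directly.)

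\medskip

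Next I would show $T\notin\text{\rm L}^\sigma_{\text{\rm Levi}}(c,c_0)$. The standard obstruction: take the increasing bounded sequence $\mathbf{x}_n=\sum_{k=1}^n e_{2k}$ in $c$ (each $\mathbf{x}_n\in c$ since it is eventually $0$, and $\|\mathbf{x}_n\|_\infty=1$). Then $T\mathbf{x}_n=\sum_{k=1}^n\beta_{2k}e_{2k}$. This sequence order-converges in $c_0$ precisely to $\mathbf{z}:=\sum_{k=1}^\infty\beta_{2k}e_{2k}$, which is an element of $c_0$; so the issue is not o-convergence in $c_0$ but rather the requirement in the definition of Levi that the limit be $T\mathbf{x}$ for some $\mathbf{x}\in c$. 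If $T\mathbf{x}_n\convo T\mathbf{x}$ in $c_0$ then by uniqueness of order limits $T\mathbf{x}=\mathbf{z}$, forcing $x_{2k}\beta_{2k}=\beta_{2k}$ and $x_{2k-1}\beta_{2k-1}=0$ for all $k$; since $\beta_k>0$ strictly, $x_{2k}=1$ and $x_{2k-1}=0$, i.e.\ $\mathbf{x}=\sum_{k\ge1}e_{2k}$, which is not a convergent sequence, contradiction. Hence no such $\mathbf{x}\in c$ exists and $T\notin\text{\rm L}^\sigma_{\text{\rm Levi}}(c,c_0)$. I expect this step to be the delicate one, since it hinges on distinguishing ``o-Cauchy/o-convergent in $c_0$'' from ``order-converges to something in the range $Tc$''— the whole point of the counterexample to \cite[Proposition 3.5]{AlEG2022}.

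\medskip

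Finally, for $T|_{c_0}$: since $c_0$ is an ideal (in fact a closed sublattice) of $c$ and $T$ maps $c_0$ into $c_0$, the restriction is well-defined. The complete quasi Levi property passes to the restriction automatically: an increasing bounded net in $c_0$ is in particular such a net in $c_0$, and the argument of the first paragraph applies verbatim with all $c_n\in[-M,M]$ (no change needed). For the failure of $\sigma$-Levi: the very same sequence $\mathbf{x}_n=\sum_{k=1}^n e_{2k}$ lies in $c_0$, and the argument of the second paragraph shows the only candidate preimage is $\sum_{k\ge1}e_{2k}\notin c_0$. Hence $T|_{c_0}\in\text{\rm L}_{\text{\rm cqLevi}}(c_0,c_0)\setminus\text{\rm L}^\sigma_{\text{\rm Levi}}(c_0,c_0)$. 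One should also remark that $T$ is compact and positive (it is a norm-limit of the finite-rank operators $\mathbf{a}\mapsto\sum_{k=1}^N\beta_k a_k e_k$, since $\beta_k\to0$), which is what makes the proposition sharpen Lemma \ref{prop2} and refute the cited claim.
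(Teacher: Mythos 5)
Your proof is correct. The negative half (exhibiting $\mathbf{x}_n=\sum_{k=1}^n e_{2k}$, identifying the order limit $\sum_{k\ge1}\beta_{2k}e_{2k}\in c_0$, and using uniqueness of order limits plus strict positivity of the $\beta_k$ to force the nonexistent preimage $\sum_{k\ge1}e_{2k}$) is exactly the paper's argument, for both $T$ and $T|_{c_0}$. The positive half differs in route: the paper disposes of $T\in\text{\rm L}_{\text{\rm cqLevi}}(c,c_0)$ in one line by noting $T\in\text{\rm K}_+(c,c_0)$ and invoking Lemma \ref{prop2}~$b)$ (a norm-convergent subnet of an increasing net forces order convergence of the whole net), whereas you construct the order limit $\mathbf{y}=\sum_k\beta_k c_k e_k$ by hand and verify $T\mathbf{x}_\alpha\uparrow\mathbf{y}$ coordinatewise. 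Your direct computation is slightly longer but self-contained and makes the order limit explicit; the paper's appeal to compactness is shorter and ties the example to the general mechanism of Lemma \ref{prop2} (which you do note at the end as a remark). Both are sound; the only cosmetic point is the half-finished inequality ``$|T\mathbf{x}_\alpha-\mathbf{y}|\le 2M\sum_k\beta_k|\cdot|$'' in your first paragraph, which you should delete since the subsequent monotonicity argument is the one that actually carries the proof.
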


\begin{proof}
Since $T\in\text{\rm K}_+(c,c_0)$ then $T\in\text{\rm L}_{\text{\rm cqLevi}}(c,c_0)$ by Lemma~\ref{prop2}. 
Now, consider $\mathbf{x}_n=\sum\limits_{k=1}^n e_{2k}\uparrow$ in $B_{c}$. Then 
$$
   T\mathbf{x}_n =\sum\limits_{k=1}^n\beta_{2k} e_{2k}\convo
   \sum\limits_{k=1}^{\infty}\beta_{2k} e_{2k}\in c_0. 
$$
There is no element $\mathbf{x}=\sum\limits_{k=1}^\infty x_k e_k\in c$ with
$T\mathbf{x}=\sum\limits_{k=1}^{\infty}\beta_{2k} e_{2k}$.
Indeed, otherwise it must satisfy $x_k=\frac{1+(-1)^k}{2}$ for every $k$ which is an absurd. 
Thus, $T\notin\text{\rm L}^\sigma_{\text{\rm Levi}}(c,c_0)$.

\medskip
The case of $T|_{c_0}$ is similar.
\end{proof}

\section{Main results}

\hspace{4mm}
Here, we present more advanced results on Levi operators.
Let us begin with the question of order boundedness.

\medskip
It is proved in \cite[Theorem 2.2]{ZC2022} that every operator which takes increasing bounded nets
of a BL $E$ to vso-convergent nets of a BL $F$ is order bounded. We do not know whether or not every 
quasi Levi operator on a BL is order bounded. However, such operators are continuous.

\medskip
\begin{theorem}\label{T5c}
Every quasi $\sigma$-Levi operator from a BL to a NL is continuous.
\end{theorem}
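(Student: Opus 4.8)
The plan is to argue by contradiction, using the closed graph theorem in the form that a discontinuous operator $T\colon E\to F$ between Banach spaces admits a sequence $x_n\to 0$ in norm with $\|Tx_n\|\ge 1$; and then to manufacture from such a sequence a bounded increasing sequence in $E$ on which $T$ fails to be o-Cauchy, contradicting the quasi $\sigma$-Levi hypothesis. The first subtlety is that $E$ is only a Banach lattice, not assumed Dedekind complete, but we only need the lattice structure to pass from arbitrary norm-null sequences to \emph{positive} ones and to form increasing partial sums, which is always available.

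First, I would observe that it suffices to treat a positive-type situation. If $T$ is discontinuous, pick $x_n$ with $\|x_n\|\le 4^{-n}$ and $\|Tx_n\|\ge 1$. Replacing $x_n$ by $|x_n|$ does not destroy norm-nullity, but may destroy $\|Tx_n\|\ge 1$ unless we first know $T$ is order bounded; since we do \emph{not} know that, instead I would split $x_n = x_n^+ - x_n^-$ and note $\max(\|Tx_n^+\|,\|Tx_n^-\|)\ge \tfrac12$, so after passing to a subsequence and possibly replacing $x_n$ by its positive or negative part we may assume $x_n\ge 0$, $\|x_n\|\le 4^{-n}$, and $\|Tx_n\|\ge \tfrac12$. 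Now set $s_n := \sum_{k=1}^{n} x_k$. Then $(s_n)$ is increasing, positive, and norm-bounded (by $\sum_k 4^{-k}$), hence a bounded increasing sequence in the Banach lattice $E$.

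The heart of the argument is to show $(Ts_n)$ is not o-Cauchy in $F$, which contradicts $T\in\text{\rm L}^\sigma_{\text{\rm qLevi}}(E,F)$. Suppose it were o-Cauchy: then there is a net $y_\beta\downarrow 0$ in $F$ such that for each $\beta$ there is $N_\beta$ with $|Ts_n - Ts_m|\le y_\beta$ for all $n,m\ge N_\beta$. In particular $|Ts_{n+1}-Ts_n| = |Tx_{n+1}| \le y_\beta$ for all $n\ge N_\beta$. Since $F$ is a \emph{normed} lattice, $\|\cdot\|$ is monotone on the positive cone, so $\|Tx_{n+1}\| = \||Tx_{n+1}|\| \le \|y_\beta\|$ for all $n\ge N_\beta$. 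But the left-hand side is $\ge \tfrac12$ for every $n$, so $\|y_\beta\|\ge \tfrac12$ for every $\beta$ — which is compatible with $y_\beta\downarrow 0$ in order (order convergence need not be norm convergence). So a little more care is needed: I would instead exploit that $y_\beta\downarrow 0$ forces $\inf_\beta \|y_\beta\| $ to be $0$ only when the norm is order continuous, which is not assumed.

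The fix — and the step I expect to be the genuine obstacle — is to avoid invoking norm continuity on $F$ and instead use the lattice operations on $E$ more aggressively to defeat \emph{any} decreasing-to-zero dominating net. Concretely, I would use disjointness: choose the $x_n$ to be pairwise disjoint, which is possible by a standard disjointification/gliding-hump argument once $T$ is discontinuous (pass to a subsequence so that the ``tails'' of $x_n$ under a suitable band projection or, lacking projections, replace $x_n$ by $\big(x_n - n\sum_{k<n}x_k\big)^+$, which remains norm-small, keeps $\|Tx_n\|$ bounded below after a further subsequence, and makes the sequence disjoint). Then $(s_n)$ is increasing with disjoint increments, and if $|Tx_{n+1}|\le y_\beta$ for all large $n$ while the $Tx_{n+1}$ are ``essentially disjoint'' in $F$, one derives $y_\beta \ge \sup$ of infinitely many disjoint pieces each of norm $\ge\tfrac12$, which is incompatible with $y_\beta$ lying in $F$ at all (an Archimedean/order-bounded obstruction), giving the contradiction without any continuity assumption on $F$'s norm. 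I would carry out the disjointification first, then form $s_n$, then derive the contradiction from the order-Cauchy condition as above; the delicate point throughout is performing the disjointification while simultaneously preserving both $\|x_n\|\to 0$ and $\inf_n\|Tx_n\|>0$, which is exactly the classical gliding-hump bookkeeping.
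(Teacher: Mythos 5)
You set up the right skeleton (contradiction, pass to positive parts, form increasing partial sums of a norm\--null sequence), and you correctly diagnose that the naive contradiction fails because $y_\beta\downarrow 0$ in order does not force $\|y_\beta\|\to 0$ in a general normed lattice. But your proposed repair does not work, and it overlooks the simple quantitative fix. The disjointification route has two unsound steps: first, $T$ is merely linear, so pairwise disjointness of the $x_n$ gives no disjointness whatsoever of the images $Tx_n$ (you acknowledge this only with the phrase ``essentially disjoint,'' which has no content here); second, even if the $Tx_n$ were genuinely pairwise disjoint with norms bounded below by $\tfrac12$, there is no ``Archimedean/order\--bounded obstruction'' to a common upper bound --- in $\ell^\infty$ the supremum of infinitely many disjoint unit vectors is simply the constant\--one sequence. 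So the heart of your argument, showing $(Ts_n)$ is not o\--Cauchy, is never actually established.

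The missing idea is to keep the growth rate rather than normalizing $\|Tx_n\|$ to be merely bounded below. Since $T$ is discontinuous you may choose $x_n\in (B_E)_+$ with $\|Tx_n\|\ge n2^n$, and set $u_n=\sum_{k=1}^n 2^{-k}x_k$, which is increasing and norm bounded. The quasi $\sigma$\--Levi hypothesis makes $(Tu_n)$ o\--Cauchy, and an o\--Cauchy sequence is \emph{order bounded} (fix one index $\beta_0$ and one tail term); in a normed lattice order bounded implies norm bounded. But $\|Tu_{n+1}-Tu_n\|=2^{-(n+1)}\|Tx_{n+1}\|\ge n+1\to\infty$, which is incompatible with norm boundedness of $(Tu_n)$. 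This sidesteps entirely the issue of whether $\|y_\beta\|\to 0$: one never needs the dominating net to be norm\--null, only that an order interval in a normed lattice is norm bounded. With your normalization $\|Tx_n\|\ge\tfrac12$ and $\|x_n\|\le 4^{-n}$ this quantitative leverage is thrown away, which is exactly why you were forced into the (unworkable) disjointness detour.
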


\begin{proof}\
Let $T:E\to F$ be a quasi $\sigma$-Levi operator from a BL $E$ to a NL $F$.
On the way to a contradiction, suppose $T$ is not continuous. Then, there exists a 
sequence $(x_n)$ in $B_X$ such that $\|Tx_n\|\to\infty$. Without lost of generality,
suppose $(x_n)\subset(B_X)^+$ and $\|Tx_n\|\ge n2^n$ for every $n$.
Set $u=\sum\limits_{k=1}^\infty 2^{-k}x_k$ and $u_n=\sum\limits_{k=1}^n 2^{-k}x_k$ for each $n\in\mathbb{N}$.
Then $0\le u_n\uparrow$ and $\|u_n\|\le 1$ for all $n$. Since $T\in\text{\rm L}_{\text{\rm qLevi}}(E,F)$, 
$(Tu_n)$ is an o-Cauchy sequence in $F$, and hence the sequence $(Tu_n)$ is order bounded. 
Thus, $(Tu_n)$ is also norm bounded. It is absurd because $\|Tu_{n+1}-Tu_n\|=\|T2^{-(n+1)}x_{n+1}\|\ge n+1$ for all $n$.
\end{proof}

\noindent
In general, even rank-one Levi operator need not to be order continuous. To see this, take any continuous linear functional $f$ on a NL $E$
such that $f$ is not order continuous, and observe that $f\in\text{\rm L}_{\text{\rm Levi}}(E,\mathbb{R})$ by Lemma \ref{prop1}.

\medskip
Clearly, $\text{\rm L}_{\text{\rm Levi}}(E,F)$ and $\text{\rm L}^\sigma_{\text{\rm Levi}}(E,F)$
are clearly closed under the scalar multiplication.
In contrast to Lemma \ref{prop2}, the next example (see also \cite[Example 4]{E2025-1}) shows that even a positive rank one perturbation
of a positive \text{\rm Levi} operator belonging to norm closure of finite rank operators
need not to be \text{\rm Levi}.

\begin{example}\label{Levi are not VS}
{\em
Pick a Banach limit functional $f\in(\ell^\infty)'_+$ and 
define $S,T\in\text{\rm L}_+(\ell^\infty)$:
$$
   S\mathbf{x} =
   \sum_{n=2}^\infty\left(\sum_{k=1}^{n-1}\frac{x_k}{2^k}\right) e_n, \ \ \text{\rm and} \ \ \
   T\mathbf{x} = f(\mathbf{x})e_1,
$$
where $\mathbf{x}=\sum_{n=1}^\infty x_n e_n\in\ell^\infty$. 
Since $S=\sum_{m=1}^\infty S_m$ with 
$S_m\mathbf{x}=\frac{x_m}{2^m}\sum_{i=m+1}^\infty e_i$ and $\|S_m\|=\frac{1}{2^m}$,
the operator $S$ lies in the norm closure of $\text{\rm F}_+(\ell^\infty)$.
Operator $T$ is of rank one, and hence $T\in\text{\rm L}_{\text{\rm Levi}}(\ell^\infty)$ 
by Lemma \ref{prop1}.

Let $(B_{\ell^\infty})_+\ni\mathbf{x}_\alpha\uparrow$.
Then, $S\mathbf{x}_\alpha\uparrow S\mathbf{x}$ for
$\mathbf{x}=\sum_{n=1}^\infty\Big(\sup_\alpha[\mathbf{x}_\alpha]_n\Big)e_n\in\ell^\infty$, 
where $[\mathbf{x}_\alpha]_n$ is the n-th coordinate of $\mathbf{x}_\alpha$.
Thus, $S\mathbf{x}_\alpha\convo S\mathbf{x}$, and hence
$S\in\text{\rm L}_{\text{\rm Levi}}(\ell^\infty)$.

Assume $S+T\in\text{\rm L}^\sigma_{\text{\rm Levi}}(\ell^\infty)$.
Take an increasing bounded sequence 
$
   \mathbf{b}_n=\sum_{k=1}^n e_k+\frac{1}{2}\sum_{k=n+1}^\infty e_k\in\ell^\infty.
$
In this case, 
$$
   T\mathbf{b}_n\equiv\frac{1}{2}e_1 \ \ \text{\rm and} \ \ 
   S\mathbf{b}_n\convo\mathbf{u}=\left(0, \ \frac{1}{2}, \ \frac{1}{2}+\frac{1}{4}, \   \frac{1}{2}+\frac{1}{4}+\frac{1}{8}, \ldots \right).
$$
Thus, $(S+T)\mathbf{b}_n\convo\mathbf{v}$ for
$
   \mathbf{v}=\left(\frac{1}{2}, \ \frac{1}{2}, \ \frac{1}{2}+\frac{1}{4}, \ 
   \frac{1}{2}+\frac{1}{4}+\frac{1}{8}, \ldots \right).
$
By the assumption, $(S+T)\mathbf{b}_n\convo(S+T)\mathbf{a}$
for some $\mathbf{a}\in\ell^\infty$. Then 
$
   S\mathbf{b}_n+\frac{1}{2}e_1\convo S\mathbf{a}+T\mathbf{a}= 
   S\mathbf{a}+f(\mathbf{a})e_1.
$
As $[S\mathbf{y}]_1\equiv 0$ then
$f(\mathbf{a})=\frac{1}{2}$.
Since $S\mathbf{b}_n\convo\mathbf{u}$ 
then $S\mathbf{a}=\mathbf{u}$, 
and hence 
$$
   \left(0,\ \frac{1}{2}a_1, \ \frac{1}{2}a_1+\frac{1}{4}a_2, \
   \frac{1}{2}a_1+\frac{1}{4}a_2+\frac{1}{8}a_3, \ldots\right)=
$$
$$
   \left(0, \ \frac{1}{2}, \ \frac{1}{2}+\frac{1}{4}, \ 
   \frac{1}{2}+\frac{1}{4}+\frac{1}{8}, \ldots \right).
$$
So, $\mathbf{a}=\left(1,1, 1, 1, \ldots\right)$,
and then $f(\mathbf{a})=1$. A contradiction. Therefore,  
$S+T\notin\text{\rm L}^\sigma_{\text{\rm Levi}}(\ell^\infty)$.

Observe that $S\vee T=S+T$ because $S\in\text{\rm L}_n(\ell^\infty)$ and 
$T\in\text{\rm L}_n^d(\ell^\infty)$. Thus, $\text{\rm L}_{\text{\rm Levi}}(\ell^\infty)$ 
is also not closed under the lattice operations.

It is worth noting that the operators $S$ and $T$ can be also considered as 
operators belonging to $\text{\rm L}(\ell^\infty,c)$.}
\end{example}

\medskip
We do not know where or not the sum of two order continuous Levi operators 
is a Levi operator.

\medskip
Now, we give some relations between Levi operators and KB-spaces (cf. also \cite[Theorem 2,  Corollary 1]{E2025-1}). 

\begin{theorem}\label{prop8}
For a BL $F$ with order continuous norm the following conditions are equivalent. 
\begin{enumerate}[$i)$]
\item 
$F$ is a \text{\rm KB}-space.
\item  
$\text{\rm L}_+(F)=\text{\rm L}_{\text{\rm Levi+}}(F)$.
\item 
$\text{\rm L}_{\text{\rm qLevi+}}(F)=\text{\rm L}_{\text{\rm Levi+}}(F)$.
\item 
$\text{\rm L}_{\text{\rm cqLevi+}}(F)=\text{\rm L}_{\text{\rm Levi+}}(F)$.
\item 
$\text{\rm K}_+(F)\subseteq\text{\rm L}_{\text{\rm Levi}}(F)$.
\end{enumerate}
\end{theorem}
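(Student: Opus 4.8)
The plan is to establish the five conditions equivalent by running the cycle $i)\Rightarrow ii)\Rightarrow iii)\Rightarrow iv)\Rightarrow v)\Rightarrow i)$, with essentially all of the work in the final implication. For $i)\Rightarrow ii)$ I would use that every positive operator on a Banach lattice is continuous and that, $F$ being a KB-space, every increasing bounded net of $F$ is norm convergent to its supremum (see \cite{Mey1991}); hence for $T\in\text{\rm L}_+(F)$ and a bounded increasing net $(x_\alpha)$ the net $(Tx_\alpha)$ is increasing and norm convergent to $Tx$ with $x=\sup_\alpha x_\alpha$, so $Tx_\alpha\convo Tx$ and $T\in\text{\rm L}_{\text{\rm Levi+}}(F)$; the reverse inclusion is obvious. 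The implications $ii)\Rightarrow iii)\Rightarrow iv)$ I would dispatch formally: intersecting the chain $(*)$ with $\text{\rm L}_+(F)$ squeezes $\text{\rm L}_{\text{\rm qLevi+}}(F)$ and $\text{\rm L}_{\text{\rm cqLevi+}}(F)$ between $\text{\rm L}_{\text{\rm Levi+}}(F)$ and $\text{\rm L}_+(F)=\text{\rm L}_{\text{\rm Levi+}}(F)$. For $iv)\Rightarrow v)$ I would invoke Lemma~\ref{prop2}~$b)$, which makes every positive compact operator completely quasi Levi, so $\text{\rm K}_+(F)\subseteq\text{\rm L}_{\text{\rm cqLevi+}}(F)=\text{\rm L}_{\text{\rm Levi+}}(F)\subseteq\text{\rm L}_{\text{\rm Levi}}(F)$.

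The substantial implication $v)\Rightarrow i)$ I would prove in contrapositive. Assume $F$ is order continuous but not a KB-space. Then $F$ is Dedekind complete, and by the classical characterization of KB-spaces (\cite{Mey1991}) $F$ contains a closed sublattice lattice-isomorphic to $c_0$; normalizing the images of the unit vectors yields a disjoint sequence $(d_n)\subset F_+$ with $\|d_n\|=1$ and $S:=\sup_N\big\|\sum_{k=1}^N d_k\big\|<\infty$. Fix a strictly positive null sequence $(\beta_n)$. For each $n$ let $P_n$ be the band projection of $F$ onto the band generated by $d_n$ (these bands being pairwise disjoint), pick $\psi_n\in F'_+$ with $\|\psi_n\|=1$ and $\psi_n(d_n)=1$, and set $\phi_n:=\psi_n\circ P_n\in F'_+$, so that $\|\phi_n\|\le 1$ and $\phi_n(d_m)=\delta_{nm}$. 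I would then consider $T:F\to F$ defined by $Ty:=\sum_{n=1}^\infty\beta_n\phi_n(y)d_n$. Disjointness of $(d_n)$ gives $\big\|\sum_{n=M}^{M'}\beta_n\phi_n(y)d_n\big\|\le(\sup_{n\ge M}\beta_n)\,S\,\|y\|$, so that $T$ is a well-defined positive operator, and $\big\|T-\sum_{n=1}^N\beta_n\phi_n\otimes d_n\big\|\le(\sup_{n>N}\beta_n)\,S\to 0$ shows $T\in\text{\rm K}_+(F)$.

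It remains to check $T\notin\text{\rm L}_{\text{\rm Levi}}(F)$, which contradicts $v)$. Mimicking Proposition~\ref{prop1aa}, I would test $T$ on the increasing bounded net $w_A:=\sum_{n\in A,\ n\text{ even}}d_n$ indexed by $A\in{\cal P}_{fin}(\mathbb{N})$, for which $\|w_A\|\le S$. Since $\phi_n(d_m)=\delta_{nm}$ we get $Tw_A=\sum_{n\in A,\ n\text{ even}}\beta_n d_n\uparrow$ and $Tw_A\convn y^{\ast}:=\sum_{n\text{ even}}\beta_n d_n\in F$, hence $Tw_A\convo y^{\ast}$. If $y^{\ast}=Tz$ for some $z\in F$, then applying the continuous functional $\phi_m$ to $\sum_n\beta_n\phi_n(z)d_n=\sum_{n\text{ even}}\beta_n d_n$ forces $\phi_m(z)=1$ for every even $m$. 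But $\phi_m(z)\to 0$ for every $z\in F$: indeed $|\phi_m(z)|\le\|\,P_m|z|\,\|$, the elements $P_m|z|$ are pairwise disjoint with $\sum_{m=1}^N P_m|z|\le|z|$ for all $N$, so $\big(\sum_{m=1}^N P_m|z|\big)_N$ is increasing and order bounded, hence order convergent, hence norm convergent by order continuity, which forces $\|P_m|z|\|\to 0$. This contradiction shows $y^{\ast}\notin T(F)$; since order limits are unique, no $x\in F$ satisfies $Tw_A\convo Tx$, and therefore $T$ is not a Levi operator.

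I expect the main obstacle to be entirely inside $v)\Rightarrow i)$, and twofold: extracting a sublattice copy of $c_0$ from a non-KB Banach lattice (structure theory), and — the real point — establishing the vanishing $\phi_m(z)\to 0$ for all $z\in F$, which is exactly where order continuity of $F$ is indispensable and which is the mechanism that throws the order limit $y^{\ast}$ of $(Tw_A)$ out of the range of $T$ (the same phenomenon already exhibited for $c_0$ itself in Proposition~\ref{prop1aa}). Everything else — well-definedness and compactness of $T$, the elementary fact that a norm-convergent increasing net in a Banach lattice order-converges to its supremum, and the formal squeezing in $ii)\Rightarrow iv)$ — should be routine bookkeeping.
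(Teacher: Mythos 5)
Your proposal is correct, and for the only substantial implication it takes a genuinely different route from the paper. The formal part coincides: the paper also runs the cycle, dispatches $i)\Rightarrow ii)$ with the one-line observation that positive operators on a KB-space are Levi (you merely spell out the norm-convergence-of-increasing-bounded-nets argument), treats $ii)\Rightarrow iii)\Rightarrow iv)$ as obvious, and gets $iv)\Rightarrow v)$ from Lemma~\ref{prop2}. The difference is in $v)\Rightarrow i)$: the paper does not construct anything, but instead notes that order continuity of the norm makes every Levi operator on $F$ a KB-operator and then cites \cite[Theorem 2]{E2025-1}, which says that if every positive compact operator on $F$ is KB then $F$ is a KB-space. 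You instead reprove that external ingredient inline: from failure of the KB-property you extract a lattice copy of $c_0$, i.e.\ a disjoint normalized positive sequence $(d_n)$ with bounded partial sums, build the positive compact ``diagonal'' operator $T=\sum_n\beta_n\phi_n\otimes d_n$ with $\phi_n=\psi_n\circ P_n$ (band projections exist since order continuity gives Dedekind completeness), and show $T$ is not Levi by testing on the even-indexed finite sums, exactly the mechanism of Proposition~\ref{prop1aa}; the key vanishing $\phi_m(z)\to 0$, proved via disjointness of $P_m|z|$ and order continuity, correctly places the order limit outside $T(F)$, and all the estimates ($\|\sum_{n\ge M}\beta_n\phi_n(y)d_n\|\le(\sup_{n\ge M}\beta_n)S\|y\|$, compactness as a norm limit of finite-rank operators) check out. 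What the paper's route buys is brevity and reuse of the literature; what yours buys is a self-contained proof that makes the obstruction explicit and shows the $c$-versus-$c_0$ phenomenon of Proposition~\ref{prop1aa} is really a general fact about non-KB order continuous Banach lattices. One cosmetic point: as written, your squeezing derives both $iii)$ and $iv)$ from $ii)$ (via $\text{\rm L}_+(F)=\text{\rm L}_{\text{\rm Levi+}}(F)$), so to keep the cycle honest you should note that $iii)\Rightarrow iv)$ follows by the same squeeze using $iii)$ itself, namely $\text{\rm L}_{\text{\rm Levi+}}(F)\subseteq\text{\rm L}_{\text{\rm cqLevi+}}(F)\subseteq\text{\rm L}_{\text{\rm qLevi+}}(F)=\text{\rm L}_{\text{\rm Levi+}}(F)$; this is a one-line fix, not a gap.
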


\begin{proof}
$i) \Longrightarrow ii)$\ 
Just observe that, under the assumption, 
$\text{\rm L}_+(F)\subseteq\text{\rm L}_{\text{\rm Levi+}}(F)$.

\medskip
Implications $ii)\Longrightarrow iii)\Longrightarrow iv)$ are obvious.

\medskip
The implication $iv) \Longrightarrow v)$ follows from Lemma \ref{prop2}.

\medskip
$v) \Longrightarrow i)$ \ 
Since the norm in $F$ is order continuous, each Levi operator on $F$ is a KB-operator in the sense of \cite{AlEG2022,GE2022,E2025-1}.
Therefore, $v)$ implies that each positive compact operator on $F$ is KB. Then, $F$ is a \text{\rm KB}-space by \cite[Theorem 2]{E2025-1}.
\end{proof}

\medskip
The following example shows that 
$\text{\rm L}_{\text{\rm Levi}}(c)$ is not complete in the operator norm.

\begin{example}\label{KB-not-compl}
{\em
Consider $T\in\text{\rm L}(c)$, $T\mathbf{x}=\sum_{k=1}^\infty \frac{x_k}{k} e_k$ for 
$\mathbf{x}=\sum_{k=1}^\infty x_k e_k\in c$,
and define a sequence $(T_n)$ in $\text{\rm L}(c)$ by
$T_n\mathbf{x}=\sum_{k=1}^n\frac{x_k}{k} e_k$. Then:
\begin{enumerate}[$a)$]
\item\
Trivially,
$\text{\rm F}\bigl(c_0\bigl)\ni T_n\convn T$.
\item\
By Lemma \ref{prop1}, 
$T_n\in{\rm L}_{\text{\rm Levi}}\bigl(c\bigl)$.
\item\
By Proposition \ref{prop1aa}, 
$T\notin\text{\rm L}^\sigma_{\text{\rm Levi}}\bigl(c\bigl)$.
\item\
By $b)$ and $c)$, both
${\rm L}_{\text{\rm Levi}}\bigl(c\bigl)$ and 
$\text{\rm L}^\sigma_{\text{\rm Levi}}\bigl(c\bigl)$
are not norm-closed in ${\rm L}\bigl(c\bigl)$.
\end{enumerate}
}
\end{example}

\noindent
We do not know whether the (completely) quasi ($\sigma$-){\text{\rm Levi}} operators are norm complete.

\medskip
Quasi Levi operators satisfy the domination property by \cite[Theorem 2.7]{AlEG2022}.
By a direct modification of the proof of \cite[Theorem 2.7]{AlEG2022},
this fact holds true also for quasi $\sigma$-Levi operators.
However, neither Levi nor $\sigma$-Levi operators satisfy the domination property.

\begin{example}\label{domination for Levi fails}
{\em
Define operators $S,T\in\text{\rm L}(c)$ by
$$
   S\mathbf{a}=\sum_{k=1}^\infty \frac{a_k}{2^k} e_k, \ \ \text{\rm and}\ \ \ 
   T\mathbf{a}=
   \sum_{k=1}^\infty\left(\sum_{j=1}^\infty\frac{a_j}{2^j}\right) e_k.
$$
Clearly, $0\le S\le T$. The operator $T$ is of rank one, and hence  
$T\in\text{\rm L}_{\text{\rm Levi}}(c)$ by Lemma \ref{prop1}. 
By Proposition \ref{prop1aa},
$S\notin\text{\rm L}^\sigma_{\text{\rm Levi}}(c)$ and
$S\in\text{\rm L}_{\text{\rm cqLevi}}(c)$.
}
\end{example}

\noindent
We do not know whether the (completely) quasi ($\sigma$-)Levi operators satisfy the domination property.

\bigskip

{\normalsize 

}

\begin{thebibliography}{30}

\bibitem{AS2005}
Abramovich, Y.A., Sirotkin, G.
On order convergence of nets. 
Positivity 9(3), 287--292 (2005).

\bibitem{AB2006}
Aliprantis, C.D., Burkinshaw, O.
Positive operators, 2nd edition.
Springer, Dordrecht, (2006).

\bibitem{AlEG2022}
Alpay, S., Emelyanov, E., Gorokhova, S.
$o\tau$-Continuous, Lebesgue, KB, and Levi Operators Between 
Vector Lattices and Topological Vector Spaces. 
Results Math 77, 117, (2022).

\bibitem{EG2023}
Emelyanov, E., Gorokhova, S.
Operators affiliated to Banach lattice properties and their enveloping norms. 
Turkish Journal of Mathematics 47(6), 1659--1673 (2023).

\bibitem{E2025-1}
Emelyanov, E. 
On compact KB operators in Banach lattices. 
Positivity 29, 15 (2025).

\bibitem{E2025-2}
Emelyanov, E. 
On collectively $\sigma$-Levi sets of operators. 
Vladikavkaz Mat. Zh. 27(1), 36--43 (2025).

\bibitem{E2025-3}
Emelyanov, E.
Collective order convergence and collectively qualified sets of operators. 
Siberian Electronic Mathematical Reports (to appear)
arxiv.org/pdf/2408.03671

\bibitem{EGK2019}
Emelyanov, E.Y., Gorokhova, S.G., Kutateladze, S.S.
Unbounded order convergence and the Gordon theorem. 
Vladikavkaz Mat. Zh. 21(4), 56--62 (2019).

\bibitem{GE2022}
Gorokhova, S., Emelyanov, E.
On operators dominated by the Kantorovich-Banach and Levi operators in locally solid lattices.  
Vladikavkaz Mat. Zh. 24(3), 55--61 (2022).

\bibitem{Kut1983}
Kutateladze, S.S. 
Fundamentals of Functional Analysis.
Nauka, Novosibirsk (1983).

\bibitem{Mey1991}
Meyer-Nieberg, P.
Banach Lattices. 
Universitext, Springer-Verlag, Berlin (1991).

\bibitem{ZC2022}
Zhang, F., Chen, Z. 
Some results of ($\sigma$-)Levi operators in Banach lattices. 
Positivity 26, 49 (2022).
\end{thebibliography}
\end{document}